\begin{document}
\bibliographystyle{plain}

\newfont{\teneufm}{eufm10}
\newfont{\seveneufm}{eufm7}
\newfont{\fiveeufm}{eufm5}
%
%
\newfam\eufmfam
              \textfont\eufmfam=\teneufm \scriptfont\eufmfam=\seveneufm
              \scriptscriptfont\eufmfam=\fiveeufm
%
%
\def\frak#1{{\fam\eufmfam\relax#1}}
%


\newtheorem{theorem}{Theorem}
\newtheorem{lemma}[theorem]{Lemma}
\newtheorem{claim}[theorem]{Claim}
\newtheorem{cor}[theorem]{Corollary}
\newtheorem{prop}[theorem]{Proposition}
\newtheorem{definition}[theorem]{Definition}
\newtheorem{remark}[theorem]{Remark}
\newtheorem{question}[theorem]{Open Question}

\def\qed{\ifmmode
\squareforqed\else{\unskip\nobreak\hfil
\penalty50\hskip1em\null\nobreak\hfil\squareforqed
\parfillskip=0pt\finalhyphendemerits=0\endgraf}\fi}

\def\squareforqed{\hbox{\rlap{$\sqcap$}$\sqcup$}}

\def\cA{{\mathcal A}}
\def\cB{{\mathcal B}}
\def\cC{{\mathcal C}}
\def\cD{{\mathcal D}}
\def\cE{{\mathcal E}}
\def\cF{{\mathcal F}}
\def\cG{{\mathcal G}}
\def\cH{{\mathcal H}}
\def\cI{{\mathcal I}}
\def\cJ{{\mathcal J}}
\def\cK{{\mathcal K}}
\def\cL{{\mathcal L}}
\def\cM{{\mathcal M}}
\def\cN{{\mathcal N}}
\def\cO{{\mathcal O}}
\def\cP{{\mathcal P}}
\def\cQ{{\mathcal Q}}
\def\cR{{\mathcal R}}
\def\cS{{\mathcal S}}
\def\cT{{\mathcal T}}
\def\cU{{\mathcal U}}
\def\cV{{\mathcal V}}
\def\cW{{\mathcal W}}
\def\cX{{\mathcal X}}
\def\cY{{\mathcal Y}}
\def\cZ{{\mathcal Z}}
\newcommand{\rmod}[1]{\: \mbox{mod}\: #1}

\def\tcN{\cN^\mathbf{c}}

\def\Tr{{\mathrm{Tr}}}

\def\mand{\qquad \mbox{and} \qquad}
\renewcommand{\vec}[1]{\mathbf{#1}}

\def\eqref#1{(\ref{#1})}


\newcommand{\ignore}[1]{}

\hyphenation{re-pub-lished}

\parskip 1.5 mm
\def\lln{{\mathrm Lnln}}
\def\Res{\mathrm{Res}\,}

\def\F{{\mathbb F}}
\def\L{{\mathbb L}}
\def\Q{{\mathbb Q}}
\def\R{{\mathbb R}}
\def\K{{\mathbb K}}
\def\Z{{\mathbb Z}}

\def\Fp{\F_p}
\def\fp{\Fp^*}
\def\Fq{\F_q}
\def\ff{\F_2}
\def\ffn{\F_{2^n}}

\def\Zm{\Z_m}
\def \Um{{\mathcal U}_m}

\def \Bf{\frak B}

\def\Km{\cK_\mu}

\def\va {{\mathbf a}}
\def \vb {{\mathbf b}}
\def \vc {{\mathbf c}}
\def\vx{{\mathbf x}}
\def \vr {{\mathbf r}}
\def \vv {{\mathbf v}}
\def\vu{{\mathbf u}}
\def \vw{{\mathbf w}}
\def \vz {{\mathbfz}}

\def\\{\cr}
\def\({\left(}
\def\){\right)}
\def\fl#1{\left\lfloor#1\right\rfloor}
\def\rf#1{\left\lceil#1\right\rceil}

\def\flq#1{{\left\lfloor#1\right\rfloor}_q}
\def\flp#1{{\left\lfloor#1\right\rfloor}_p}
\def\flm#1{{\left\lfloor#1\right\rfloor}_m}

\def\Al{{\sl Alice}}
\def\Bob{{\sl Bob}}

\def\Or{{\mathcal O}}

\def\inv#1{\mbox{\rm{inv}}\,#1}
\def\invM#1{\mbox{\rm{inv}}_M\,#1}
\def\invp#1{\mbox{\rm{inv}}_p\,#1}

\def\Ln#1{\mbox{\rm{Ln}}\,#1}

\def \nd {\,|\hspace{-1.2mm}/\,}

\def\ord{\mu}

\def\E{\mathbf{E}}

\def\Cl{{\mathrm {Cl}}}

\def\epp{\mbox{\bf{e}}_{p-1}}
\def\ep{\mbox{\bf{e}}_p}
\def\eq{\mbox{\bf{e}}_q}

\def\bm{\bf{m}}

\newcommand{\floor}[1]{\lfloor {#1} \rfloor}

\newcommand{\comm}[1]{\marginpar{%
\vskip-\baselineskip 
\raggedright\footnotesize
\itshape\hrule\smallskip#1\par\smallskip\hrule}}

\def\rem{{\mathrm{\,rem\,}}}
\def\dist {{\mathrm{\,dist\,}}}
\def\etal{{\it et al.}}
\def\ie{{\it i.e. }}
\def\veps{{\varepsilon}}
\def\eps{{\eta}}

\def\ind#1{{\mathrm {ind}}\,#1}
               \def \MSB{{\mathrm{MSB}}}
\newcommand{\abs}[1]{\left| #1 \right|}

\title{On Fully Split Lacunary Polynomials in Finite Fields}

\author {{\sc Khodakhast Bibak $^1$ and Igor E.  Shparlinski $^2$}\\ \\
 $^1$ Department of Combinatorics and Optimization\\University of Waterloo\\
              Waterloo, Ontario, Canada N2L 3G1\\
              {\tt kbibak@uwaterloo.ca}
\and
$^2$ Department of Computing\\ Macquarie University\\
             Sydney, NSW 2109, Australia\\
             {\tt igor@comp.mq.edu.au}
              }

\maketitle

\begin{abstract}
We estimate the number of possible types degree patterns
of $k$-lacunary polynomials of degree $t < p$ which split
completely modulo $p$. The result is based on a combination
of a bound on the number of zeros of lacunary polynomials with
some graph theory arguments.
\end{abstract}


\section{Introduction}

Zeros and factorisations of lacunary polynomials, that is,
polynomials of high degree with relatively small number of
non-zero coefficients, has always been a subject of active
investigation, see~\cite{CTV,FGS,Len1,Len2,Schin} and references
therein. We say that a polynomial $f$ over a field $\K$ is
$k$-lacunary if it has at most $k+1$ non-zero coefficients,
including a non-zero constant term, that is, if $f(0) \ne 0$ and
\begin{equation}
\label{eq:LacPoly}
f(X) = a_0 + a_1X^{t_1} + \ldots + a_kX^{t_k} \in \K[X]
\end{equation}
for some positive integers $t_1 < \ldots <t_k$.

For example, a classical result of Descartes asserts that a
$k$-lacunary polynomial $f\in  \R[X]$ may have at most $2k$ real
roots. Furthermore, Lenstra~\cite{Len2} has shown that for an
algebraic  number field $\K$ of degree $m$ over $\Q$ and a
$k$-lacunary polynomial $f\in \K[X]$, the product $g$ of all
irreducible divisors $h\mid f$ of degree at most $\deg h \le d$
 is of degree
$$
\deg g  = O\(k^2 2^{md} md \log (2mdk)\).
$$

Schinzel~\cite{Schin} has obtained a series of statistical results
about the number of $k$-lacunary irreducible polynomials with
prescribed coefficients. In particular,
by~\cite[Corollary~2]{Schin}, for any algebraic numbers $a_0,
\ldots,a_k$ there are at most $O\(T^{\fl{(k+1)/2}}\)$ $k$-tuples
of integers
\begin{equation}
\label{eq:k tuples}
\vec{t} = (t_1,\ldots, t_k),
\qquad  1 \le t_1 < \ldots <t_k,
\end{equation}
with  $t_k \le T$ and such that the  largest non-cyclotomic factor
(that is, a factor which does not have roots that are roots of
unity) of the $k$-lacunary polynomial~\eqref{eq:LacPoly} is
reducible over $\K = \Q\(a_1/a_0, \ldots, a_k/a_0\)$.

Here we consider a related question about estimating the number
$N_k(p,t)$ of $k$-tuples~\eqref{eq:k tuples}  such that there is a
$k$-lacunary polynomial  of the form~\eqref{eq:LacPoly} of degree $t_k = t$ over the finite field
$\K =\F_p$ of $p$ elements, where $p$ is a prime,
that fully splits over $\F_p$.

\begin{theorem}
\label{thm:split p} If a positive integer $k$ is fixed then for
any prime $p$ and  positive integer $t < p$, we have,
$$
N_k(p,t) \le t^{k - k\rf{(k-3)/2}-1} p^{(k-1)\rf{(k-3)/2}+o(1)}
$$
as $p\to \infty$.
\end{theorem}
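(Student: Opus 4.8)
The plan is to turn the estimate for $N_k(p,t)$ into a structural statement about the multiset of roots of a completely split lacunary polynomial, and then to convert that structure into a count of admissible degree patterns by a graph argument.

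First I would fix, for each $k$-tuple $\vec t=(t_1,\dots,t_k)$ with $t_k=t$ counted by $N_k(p,t)$, a single witness: a polynomial $f_{\vec t}$ as in~\eqref{eq:LacPoly} of degree $t$ that splits completely over $\F_p$, normalised to be monic. Since $f_{\vec t}$ splits completely and $f_{\vec t}(0)\neq 0$, all of its roots lie in the cyclic group $\F_p^*$; in particular every root is a root of unity of order dividing $p-1$, and $f_{\vec t}=\prod_{i=1}^t(X-\gamma_i)$ with $\gamma_i\in\F_p^*$. The hypothesis that $f_{\vec t}$ is supported precisely on the exponents $0,t_1,\dots,t_{k-1},t$ is equivalent to saying that the elementary symmetric function $e_m(\gamma_1,\dots,\gamma_t)$ vanishes for every $m\in\{1,\dots,t-1\}$ outside the $k-1$ codegrees $t-t_1,\dots,t-t_{k-1}$ and is nonzero for those $m$. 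Because $t<p$, Newton's identities are invertible over $\F_p$, so this translates without loss into an equivalent condition on the power sums $p_m=\sum_i\gamma_i^m$: each maximal run of consecutive vanishing $e_m$ forces a run of vanishing $p_m$, which is a far more rigid multiplicative constraint on the $\gamma_i$.

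Next I would feed in the bound on the number of zeros of lacunary polynomials. A polynomial all of whose roots lie in $\F_p^*$ is, up to a factor of bounded complexity, a product of cyclotomic-type blocks, and the zero bound limits how many such blocks $f_{\vec t}$ can be built from: there are $n_1,\dots,n_s$ with $n_j\mid p-1$ and $s\le C(k)$, together with multiplicities $m_1,\dots,m_s\le C(k)$, so that the support of $f_{\vec t}$ coincides, up to a bounded correction, with the Minkowski sum $\bigoplus_{j=1}^s\{0,n_j,\dots,m_jn_j\}$; any conclusion of the same shape --- that the root multiset is governed by $O_k(1)$ multiplicative blocks of bounded multiplicity, each forced to live in a subgroup whose order divides $p-1$ --- would serve as well. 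The constraint $n_j\mid p-1$ is where the divisor bound $\tau(p-1)=p^{o(1)}$ enters and absorbs the ambient $p$-dependence of the moduli, while the bounded correction factor and the blocks not pinned down by internal collisions are the source of the genuine power of $p$ in the statement.

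Finally, the count. The requirement that $\bigl|\bigoplus_j\{0,n_j,\dots,m_jn_j\}\bigr|=k+1$ is extremely restrictive: as soon as more than one block is present the product of the block lengths exceeds $k+1$, so there must be many additive coincidences $\sum_j(a_j-b_j)n_j=0$ among the moduli, i.e.\ many linear relations forcing all but a few of the $n_j$ in terms of the others. I would encode these relations as a graph (more precisely a linear matroid) $G_{\vec t}$ on the blocks; there are only $2^{O(k^2)}=O_k(1)$ possibilities for it, and for each one I would count the admissible $\vec t$ by solving for the dependent parameters and letting the independent ones range freely. Summing over the $O_k(1)$ graphs yields a bound of the shape $t^{a}p^{b+o(1)}$, where $a$ and $b$ are the values of a finite combinatorial optimisation over these graphs; the extremal configuration has an independent part of size about $\rf{(k-3)/2}$, each independent block carrying a factor of roughly $p^{k-1}$ and the remaining $t_i$ being forced, and running this optimisation through is exactly what should produce $a=k-k\rf{(k-3)/2}-1$ and $b=(k-1)\rf{(k-3)/2}$. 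The main obstacle, and where the real work lies, is twofold: establishing the structural description of the root multiset in precisely the form the zero bound supplies, and then carrying out the graph/matroid optimisation so that it yields exactly these exponents rather than something weaker; a minor but necessary point is to keep the $t$-versus-$p$ accounting honest throughout, retaining the trivial estimate $N_k(p,t)\le\binom{t-1}{k-1}$ for the range where $t$ is small compared with $p$.
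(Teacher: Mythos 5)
There is a genuine gap, and it sits exactly where you locate ``the real work''. Your pivotal step is the structural claim that, because of the lacunary zero bound, the support of a completely split $k$-lacunary polynomial coincides (up to a bounded correction) with a Minkowski sum of $O_k(1)$ arithmetic-progression blocks with moduli dividing $p-1$. Lemma~\ref{lem:SprEq Zeros} gives nothing of the sort: it is a purely numerical inequality, $Q \ll p^{1-1/k}D^{1/k}$ with $D=\min_i\max_{j\ne i}\gcd(t_j-t_i,p-1)$, and carries no information about a factorisation of $f_{\vec t}$ into cyclotomic-type blocks or about the support being a sumset. No such decomposition is proved (and none is needed), and the subsequent matroid ``optimisation'' that is supposed to output the exponents $k-k\rf{(k-3)/2}-1$ and $(k-1)\rf{(k-3)/2}$ is only asserted, not carried out. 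A second, smaller omission: ``splits completely'' gives $t$ roots \emph{with multiplicity}, whereas the zero bound counts distinct solutions in $\F_p^*$; to bridge this you need the fact (Lemma~\ref{lem:SprEq Mult}, proved via a Vandermonde-type determinant of the relations $F^{(j)}(r)=0$) that a nonzero root of a $k$-lacunary polynomial of degree $<p$ has multiplicity at most $k$, so that there are at least $t/k$ distinct roots. Your proposal never addresses multiplicities, and the Newton-identity reformulation of the support condition, while correct, is not put to any use.

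The actual argument is much more direct and avoids any structural description of the root multiset. From the $\ge t/k$ distinct roots and Lemma~\ref{lem:SprEq Zeros} one deduces the lower bound $D_{\vec t}\gg t^{k}p^{-(k-1)}$ on the gcd parameter itself. Then, for each admissible value $D$ of $D_{\vec t}$ (a divisor of $p-1$, so only $p^{o(1)}$ choices), one builds a graph on the $k+1$ exponents $0,t_1,\ldots,t_k$, joining $i$ and $j$ when $\gcd(t_i-t_j,p-1)\ge D$; the definition of $D_{\vec t}$ forces minimum degree $\ge 1$, and Ore's bound (Lemma~\ref{lem:Dom Set}) supplies a dominating set of size $\fl{(k+1)/2}$. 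Exponents indexed by the dominating set are chosen freely (at most $t$ ways each, with $t_0=0$ and $t_k=t$ already fixed), while each remaining exponent is confined to at most $(t/D)p^{o(1)}$ residues by the divisor-function bound; the worst case, where neither $0$ nor $k$ lies in the dominating set, combined with $D\gg t^k p^{-(k-1)}$, yields exactly the stated exponents. As written, your proposal replaces this short chain by two unestablished steps, so it does not constitute a proof of the theorem.
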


Clearly, Theorem~\ref{thm:split p} is nontrivial only for $k > 3$
and for
\begin{equation}
\label{eq:large t}
t > p^{1-1/k +\varepsilon},
\end{equation}
with some fixed $\varepsilon>0$. Furthermore, for $t \gg p$ we
obtain the bound
$$
N_k(p,t) \le t^{\rf{k/2} +1 +o(1)}.
$$

Our result is based on a rather unusual combination of two techniques:
a bound on the number of zeros of lacunary polynomials
(see Section~\ref{sec:poly})
and a bound on the so-called domination number of a graph
(see Section~\ref{sec:dom}).

Throughout the paper,
the implied constants in the symbols `$O$', `$\ll$'
and `$\gg$'   may depend on $k$  (we recall that
the notations $U \ll V$ and $V \gg  U$ is
equivalent to   $U = O(V)$).

\section{Zeros of Lacunary Polynomials}
\label{sec:poly}

We need the following estimate from~\cite{CFKLLS} on the number of zeros of lacunary
polynomials over $\F_p$.

\begin{lemma}
\label{lem:SprEq Zeros}
For
$k +1\ge 2$
elements $a_0, a_1, \ldots\,, a_k \in \F_p^*$ and
integers $0 = t_0 < t_1 < \ldots < t_k<p$,
the number of solutions  $Q$  to the equation
$$
\sum_{i=0}^k a_ix^{t_i} = 0, \qquad x \in \F_p^*,
$$
with $t_0 = 0$, satisfies
$$
Q \le 2  p^{1 - 1/k} D^{1/k} + O(p^{1 - 2/k} D^{2/k}),
$$
where
$$
D = \min_{0 \le i \le k} \max_{j \ne i} \gcd(t_j - t_i, p-1).
$$

\end{lemma}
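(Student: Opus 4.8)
The natural plan is to detect the zeros with additive characters and thereby convert the count $Q$ into a sparse exponential sum, and then to bound that sum by Mordell's method, refined with the $\gcd$ data. First I would use orthogonality of the additive characters of $\Fp$: writing $\ep(z)=\exp(2\pi iz/p)$,
$$Q=\sum_{x\in\fp}\frac1p\sum_{c\in\Fp}\ep\Big(c\sum_{i=0}^k a_i x^{t_i}\Big)=\frac{p-1}{p}+\frac1p\sum_{c\ne0}S_c,\qquad S_c=\sum_{x\in\fp}\ep\Big(c\sum_{i=0}^k a_i x^{t_i}\Big),$$
so that $Q<1+\max_{c\ne0}\abs{S_c}$. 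The key point is that each $S_c$ is a sum of exactly the same shape: it has the same exponents $t_0<\cdots<t_k$ and coefficients $ca_0,\ldots,ca_k$, which are again all nonzero because $c\ne0$. In particular the quantity $D$ attached to $S_c$ coincides with the $D$ of the statement. Thus the whole problem reduces to proving the uniform sparse-sum bound $\max_{c\ne0}\abs{S_c}\le 2p^{1-1/k}D^{1/k}+O(p^{1-2/k}D^{2/k})$, the spare $+1$ being harmlessly absorbed into the error term since $k\ge1$ forces the main term to exceed $1$.

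Second, I would establish this exponential-sum estimate by Mordell's dilation-and-Hölder argument. Since $S_c$ is invariant under the substitution $x\mapsto\lambda x$ for any $\lambda\in\fp$ (this merely permutes $\fp$), one can average $\abs{S_c}^{2k}$ over all dilations $\lambda$ and apply Hölder's inequality to bound $\abs{S_c}^{2k}$ by a power of $p-1$ times the number of solutions in $\fp^{2k}$ of a Vinogradov-type system of $k$ congruences of the shape
$$\sum_{j=1}^{k}\big(x_j^{t_i-t_{i_0}}-y_j^{t_i-t_{i_0}}\big)\equiv 0\pmod p\qquad(i\ne i_0),$$
where $i_0$ is a pivot index that I am free to choose. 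Here the diagonal solutions ($x_j=y_j$ up to permutation) contribute the main term responsible for the exponent $1-1/k$, while each off-diagonal congruence $x^{t_i-t_{i_0}}=\text{const}$ has at most $\gcd(t_i-t_{i_0},p-1)$ solutions in $\fp$; this is precisely what injects the factor $D^{1/k}$. Choosing $i_0$ to minimize $\max_{i\ne i_0}\gcd(t_i-t_{i_0},p-1)$ is exactly what produces the quantity $D=\min_{0\le i\le k}\max_{j\ne i}\gcd(t_j-t_i,p-1)$ in the final bound.

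The routine half is the character-sum reduction in the first paragraph; the real content, and the main obstacle, is the off-diagonal count in the auxiliary system. One must show that the solutions with some $x_j\ne y_k$ contribute a factor governed by the $\gcd$'s (at most $D$ per equation) rather than by $p$, and then balance the diagonal and off-diagonal contributions so that extracting the $2k$-th root yields precisely the leading constant $2$ together with the secondary term $O(p^{1-2/k}D^{2/k})$. Controlling these off-diagonal solutions is where the arithmetic of the exponents enters in full force, and in the sharpest forms it is handled either by an elementary Stepanov-type estimate or by Weil's bound applied to the relevant curves; this is the step I would expect to require the most care.
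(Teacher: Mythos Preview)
The paper does not prove this lemma at all; it is quoted verbatim from~\cite{CFKLLS} and used as a black box, so there is no in-paper argument to compare against.

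That said, your sketch contains a real gap in the exponent. After pulling out the constant term you are bounding a $k$-term sum $T=\sum_{x\in\fp}\ep\bigl(\sum_{i\ne i_0} b_i x^{t_i-t_{i_0}}\bigr)$. Mordell's dilation trick, raised to the $2k$-th power and summed over all coefficient vectors, gives
\[
(p-1)\,|T|^{2k}\ \le\ g\,p^{k}\,N_k,
\]
with $g=\gcd_{i\ne i_0}(t_i-t_{i_0},p-1)$ and $N_k$ the number of $(x_1,\dots,x_k,y_1,\dots,y_k)\in(\fp)^{2k}$ satisfying your Vinogradov-type system. The diagonal (permutation) solutions contribute at most $k!(p-1)^{k}$ to $N_k$, and inserting this yields $|T|\ll p^{1-1/(2k)}$, \emph{not} $p^{1-1/k}$. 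The loss of a factor~$2$ in the exponent is intrinsic: you average $p-1$ dilates inside a $k$-parameter family, and the saving is spread over the $2k$-th root. In Mordell's classical case $t_i=i$ one also has translations $x\mapsto x+\mu$, which supply the missing averaging and push the exponent down to $1-1/k$; for sparse exponents translations destroy the shape of the polynomial and are unavailable. Already the case $k=1$ shows the reduction $Q<1+\max_c|S_c|$ is too lossy: the single-term sum has $|S_c|$ of order $D\sqrt{p}$, whereas the lemma asserts $Q\le 2D+O(D^{2}/p)$.

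So neither the passage through additive characters nor the dilation--H\"older step, as you describe them, can reach the stated bound. The proof in~\cite{CFKLLS} works with the zero-count $Q$ directly rather than via $\max_c|S_c|$; the saving of $p^{1/k}$ (rather than $p^{1/(2k)}$) arises because it is $Q^{k}$, not the $2k$-th moment of an exponential sum, that gets played off against $p^{\,k-1}$, and the min--max quantity $D$ enters through the multiplicity of the dilation map on the coefficient side. If you want to keep your framework, you would need a genuinely stronger sparse exponential-sum estimate of size $p^{1-1/k}D^{1/k}$, which dilation-only Mordell does not provide.
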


\begin{lemma}
\label{lem:SprEq Mult}
For
$k+1 \ge 2$
elements $a_0, a_1, \ldots\,, a_k \in \F_p^*$ and
integers $0 = t_0 < t_1 < \ldots < t_k<p$,
the multiplicity of any root $\rho$ of the polynomial
$$
\sum_{i=0}^k a_iX^{t_i} \in \F_p[X]
$$
is at most $k$.
\end{lemma}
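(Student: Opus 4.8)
The plan is to argue that a root of multiplicity $m$ of the $k$-lacunary polynomial $F(X) = \sum_{i=0}^k a_i X^{t_i}$ forces $m$ to be at most $k$ by looking at the derivatives of $F$ — but naively differentiating is dangerous in characteristic $p$, since $t_i$ may be divisible by $p$ and derivatives can collapse. So first I would recall the right tool: the \emph{Hasse derivatives} (divided-power derivatives) $D^{(j)}$, for which $D^{(j)}X^n = \binom{n}{j}X^{n-j}$, and for which $\rho$ is a root of $F$ of multiplicity $\ge m$ if and only if $D^{(0)}F(\rho) = D^{(1)}F(\rho) = \cdots = D^{(m-1)}F(\rho) = 0$. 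This characterisation is valid over any field, in particular over $\F_p$, and it is exactly what makes the multiplicity statement insensitive to the characteristic.

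Next, suppose for contradiction that $\rho$ is a root of multiplicity $\ge k+1$; note $\rho \ne 0$ since $a_0 = F(0) \ne 0$ (the constant term is nonzero, as $t_0 = 0$ and all $a_i \in \F_p^*$). Then $D^{(j)}F(\rho) = 0$ for $j = 0, 1, \ldots, k$, which gives a homogeneous linear system
$$
\sum_{i=0}^k \binom{t_i}{j} \rho^{t_i - j} a_i = 0, \qquad j = 0, 1, \ldots, k,
$$
in the $k+1$ unknowns $a_0, \ldots, a_k$. Since not all $a_i$ vanish, the coefficient matrix $M = \left( \binom{t_i}{j} \rho^{t_i} \right)_{0 \le i,j \le k}$ (after pulling the harmless nonzero factor $\rho^{-j}$ out of row $j$) must be singular, i.e. $\det M = 0$. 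Factoring $\rho^{t_i}$ out of column $i$ — again a nonzero scalar — reduces this to the vanishing of the determinant $\det\left( \binom{t_i}{j} \right)_{0 \le i,j \le k}$.

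The main obstacle, and the crux of the argument, is therefore to show that the matrix $\left( \binom{t_i}{j} \right)_{0 \le i,j \le k}$ is nonsingular over $\F_p$ whenever $0 = t_0 < t_1 < \cdots < t_k < p$. Over a field of characteristic zero this is classical: $\binom{t_i}{j}$ as a function of $j$ (or a polynomial in $t_i$ of degree $j$) makes the matrix row-equivalent to the Vandermonde matrix $(t_i^j)$, whose determinant is $\prod_{i < i'}(t_{i'} - t_i) \ne 0$. The point is that since all $t_i$ lie in the range $[0, p-1]$, the integer $\prod_{i<i'}(t_{i'}-t_i)$ is a product of nonzero integers each of absolute value less than $p$, hence is not divisible by $p$ — wait, this needs the binomial-to-monomial change of basis to itself be invertible mod $p$, which it is, since that change of basis is unitriangular with integer entries (Stirling numbers). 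So $\det\left(\binom{t_i}{j}\right) \equiv \prod_{i<i'}(t_{i'}-t_i) \not\equiv 0 \pmod p$, contradicting $\det M = 0$. Hence the multiplicity is at most $k$, completing the proof. The one place to be careful is to make sure every scalar I divide out ($\rho^{-j}$, $\rho^{t_i}$, and the unitriangular transition matrix) is genuinely invertible mod $p$ — which it is, precisely because $\rho \ne 0$ and $0 \le t_i < p$.
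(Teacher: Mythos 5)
Your proposal is correct and is essentially the paper's argument: the paper applies the ordinary $j$-th derivatives (multiplied by $X^j$), obtaining the falling-factorial matrix $\bigl(\prod_{h=0}^{j-1}(t_i-h)\bigr)_{i,j}$ whose determinant is exactly the Vandermonde product $\prod_{0\le i<i'\le k}(t_{i'}-t_i)\ne 0$ in $\F_p$, and your Hasse-derivative version just replaces these entries by $\binom{t_i}{j}$. The one small slip is your change-of-basis claim: the transition from the binomials $\binom{x}{j}$ to the monomials $x^j$ is triangular with diagonal entries $1/j!$ (not unitriangular), so in fact $\det\bigl(\binom{t_i}{j}\bigr)_{i,j}=\prod_{0\le i<i'\le k}(t_{i'}-t_i)\big/\prod_{j=0}^{k}j!$ rather than being congruent to the bare Vandermonde product; since $k\le t_k<p$ every $j!$ with $j\le k$ is invertible modulo $p$, the determinant is still nonzero in $\F_p$ and your conclusion stands.
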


\begin{proof} Let
$$
F(X) = \sum_{i=0}^k a_iX^{t_i}.
$$
Then for the $j$ derivative $F^{(j)}(X)$ we have
$$
F^{(j)}(X)X^j = \sum_{i=0}^k\prod_{h=0}^{j-1} (t_i -h)  a_iX^{t_i}
$$
(where as usual, we set $F^{(0)}(X)= F(X)$).
Thus, if $r \ne 0$ is a root of multiplicity at least $k+1\le t_k
<p$  in the algebraic closure of $\F_p$,  then
$$
F^{(j)}(r) = 0, \qquad j =0, \ldots, k.
$$
Therefore, the homogeneous system of equations
$$
 \sum_{i=0}^k \prod_{h=0}^{j-1} (t_i -h) x_i  = 0, \qquad j =0, \ldots, k,
$$
has a non-zero solution $x_i =  a_ir^{t_i}$, $i =0, \ldots, k$.
This implies
$$
\det\left[ \( \prod_{h=0}^{j-1} (t_{i} -h) \)_{i,j=0, \ldots, k}\right] = 0,
$$
which is impossible for $0 = t_0 < t_1 < \ldots < t_k <p$ as an easy
calculation shows that
$$
\det\left[ \( \prod_{h=0}^{j-1} (t_{i} -h) \)_{i,j=0, \ldots, k}\right]
 =  \prod_{0 \le i <j\le k}
    (t_j - t_i) \ne 0.
$$
The above contradiction implies the desired result.
\end{proof}

\section{Domination Number of a Graph}
\label{sec:dom}

Let $G=(V,E)$ be a simple undirected graph of order $n$.
A {\it dominating set\/}
$S$ of $G$ is a vertex subset such that any vertex of $V\setminus
S$  has a neighbour in $S$. Intuitively, a dominating set of a
graph is a vertex subset whose neighbours, along with themselves,
make up the vertex set of the graph.

The minimum cardinality of a dominating set of $G$ is called the
{\it domination number\/} $\gamma(G)$ of $G$. In other words,
$$\gamma(G)=\min_{S\subseteq V(G)}\left\{|S|~:~V(G)\subseteq
\bigcup_{v\in S} \hat{N}(v)\right\},$$
where $\hat{N}(v)$ denotes the closed neighbourhood of a vertex
$v$.

We denote by $\delta(G)$ the minimum degree of $G$.

When $\delta(G)$ is big enough, there are very good upper bounds
for the domination number of the graph $G$ in terms of $\delta(G)$
and $n$ (see, for example,~\cite{CSSF, HHS2}). However, for small values
of $\delta(G)$ the classical result of Ore~\cite{ORE} is stronger
and provides an upper bound for the domination number of a graph
with no isolated vertices:

\begin{lemma} 
\label{lem:Dom Set}
If $G$ is a graph of order $n$ with
$\delta(G)\geq1$, then
$$\gamma(G)\leq \frac{n}{2}.
$$
\end{lemma}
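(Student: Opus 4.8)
The plan is to use the classical argument via a minimal dominating set and its complement. First I would fix a dominating set $D$ of $G$ that is minimal with respect to inclusion; such a set exists because $V(G)$ itself is a dominating set and $G$ is finite, so one may repeatedly delete vertices while the dominating property persists. The goal is then to prove that the complement $V(G)\setminus D$ is also a dominating set of $G$. Once this is shown, $D$ and $V(G)\setminus D$ are two dominating sets with $|D|+|V(G)\setminus D|=n$, so the smaller of the two has at most $n/2$ vertices, whence $\gamma(G)\le n/2$.

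The heart of the proof is establishing that $V(G)\setminus D$ dominates $G$. Since it trivially dominates its own vertices, it suffices to show that every $w\in D$ has a neighbour lying outside $D$. Here I would invoke the minimality of $D$: because $D\setminus\{w\}$ is not a dominating set, there exists a vertex $u$ that is dominated by $w$ but by no other vertex of $D$ (that is, $u$ is a ``private neighbour'' of $w$). If $u=w$, then $w$ has no neighbour in $D$, and since $\delta(G)\ge 1$ the vertex $w$ has at least one neighbour, which must then lie in $V(G)\setminus D$. If $u\ne w$, then $u\notin D$, for otherwise $u$ would be dominated by itself, and $u$ is a neighbour of $w$ outside $D$. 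In either case $w$ has a neighbour in $V(G)\setminus D$, as required, so $V(G)\setminus D$ is indeed a dominating set.

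I expect the private-neighbour case analysis in the second step to be the only point demanding any care; everything else is routine bookkeeping. It is worth noting that the hypothesis $\delta(G)\ge 1$ enters exactly once, in the case $u=w$, and that it is genuinely necessary: an isolated vertex must belong to every dominating set, which would invalidate the complement argument.
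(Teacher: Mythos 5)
Your proof is correct and complete: the minimal-dominating-set argument, the private-neighbour case analysis (with the case $u=w$ using $\delta(G)\ge 1$), and the conclusion that the smaller of $D$ and $V(G)\setminus D$ has at most $n/2$ vertices are all sound, and this is precisely the classical proof of Ore's bound. The paper itself offers no proof of this lemma—it simply cites Ore's book—so your argument supplies exactly the standard justification that the citation points to.
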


\section{Proof of Theorem~\ref{thm:split p}}

Since $p > t_k$,  by
Lemma~\ref{lem:SprEq Mult}
the
multiplicity  of each non-zero root of a polynomial of the
form~\eqref{eq:LacPoly} does not exceed $k$.
Hence, if  a polynomial $F(X) \in \F_p[X]$ of the form~\eqref{eq:LacPoly} splits
completely  over $\F_p$ then the equation
$$
a_0 + a_{1}x^{t_1} + \ldots + a_{n}x^{t_k} = 0, \qquad x \in \F_p^*,
$$
with $1 \le t_1< \ldots < t_k$
has at least $t_k/k$ solutions.  Then, from Lemma~\ref{lem:SprEq Zeros} we have
$$
t_k/k  =  O\(p^{1 -1/k} D_\vec{t}^{1/k}\),
$$
where
$$
D_\vec{t}  = \min_{0 \le i \le n} \max_{j \ne i} \gcd(t_j - t_i, p-1).
$$
Thus $D_\vec{t}t \mid p-1$ and, since $k$ is fixed,
\begin{equation}
\label{eq:Dt}
t\ge D_\vec{t} \gg  t_k^{k}p^{-(k-1)} = t^{k}p^{-(k-1)}.
\end{equation}
We now fix $D \mid p-1$, and for each $\vec{t} = (t_1,\ldots,
t_k)$ construct a graph $G_\vec{t}(D)$ on $k+1$ vertices $0,
\ldots, k$, connecting $i$ and $j$ if and only if $\gcd(t_i-t_j,
p-1) \ge D$ (where, as before $t_0=0$).

Clearly, if $D_\vec{t}=D$ and  $G_\vec{t}(D)=G$ then $\delta(G)\ge
1$.

Now, for a fixed positive integer  $D \le  t < p$ and a graph $G$
with $k+1$ vertices and $\delta(G)\ge 1$, we estimate the number $M_p(D,G,t)$ of vectors
$\vec{t} = (t_1,\ldots, t_k) \in \Z^k$ with $1 \le t_1 < \ldots
<t_k$ and $t_k=t$ such that $G_\vec{t}(D)=G$.
 Summing over all
graphs $G$ (since $k$ is fixed
there are only finitely many graphs) and admissible values of $D$,
that is, with $t\ge D  \gg    t^{k}p^{-(k-1)}$, see~\eqref{eq:Dt},
 leads to the desired estimate.

Given a graph $G$ with $k+1$ vertices and $\delta(G)\ge 1$,
we now fix a dominating set $S$ in $G$ of cardinality $\# S  =
\fl{(k+1)/2}$, which exists by Lemma~\ref{lem:Dom Set} (obviously,
we can always add more vertices to $S$ if necessary to guarantee
$\# S  = \fl{(k+1)/2}$). So for each $j \not \in S$ with $j \ne 0,
k$, there is $i\in S$ such that $\gcd(t_i-t_j, q-1) \ge D$. So if
$t_i$ is fixed, then $t_j$ can take at most
\begin{equation}
\label{eq:Dom Choice}
\sum_{\substack{d\mid p-1\\ d \ge D}} \frac{t}{d}  \ll \frac{t}{D}
\sum_{d\mid p-1}1  = \frac{t}{D} p^{o(1)}
\end{equation}
values, where we have used the known bound on the divisor
function, (see~\cite[Theorem~320]{HardyWright}). Finally, when
$t_k=t$ is fixed, each $t_i$, $i \in S$, can take at most $t$
values.

Furthermore,
if both $0,k \in S$ then there are only
$$\# S - 2 \le \fl{(k+1)/2}-2 = \fl{(k-3)/2}$$
elements $t_i$ with $i \in S\setminus \{0,k\}$ to be chosen.
After all values of $t_i$ with $i \in S$ are fixed, we see
from~\eqref{eq:Dom Choice} that the remaining
$$k+1 - \#S =  \rf{(k+1)/2}$$
elements $t_j$, $j \not \in S$,  can be chosen in at most
$(tp^{o(1)}/D)^{\rf{(k+1)/2}}$ ways.
So in this case
\begin{equation}
\label{eq:MDt20} M_p(D,G,t) \le t^{\fl{(k-3)/2}}
(t/D)^{{\rf{(k+1)/2}}}p^{o(1)} = t^{k-1}
D^{-\rf{(k+1)/2}}p^{o(1)}.
\end{equation}

If $0 \in S$ but $k\not \in S$, or  $0 \not \in S$ but $k \in S$, then the same
argument implies:
\begin{equation}
\label{eq:MDt11} M_p(D,G,t) \le t^{\fl{(k-1)/2}}
(t/D)^{{\rf{(k-1)/2}}}p^{o(1)} = t^{k-1}
D^{-\rf{(k-1)/2}}p^{o(1)}.
\end{equation}

Finally, if both  $0,k \not \in S$ then we get
\begin{equation}
\label{eq:MDt02} M_p(D,G,t) \le t^{\fl{(k+1)/2}}
(t/D)^{{\rf{(k-3)/2}}}p^{o(1)} = t^{k-1}
D^{-\rf{(k-3)/2}}p^{o(1)}.
\end{equation}

Clearly, bound~\eqref{eq:MDt02} dominates the
bounds~\eqref{eq:MDt20} and~\eqref{eq:MDt11}. In particular, for
$t \ge D \gg t^{k}p^{-(k-1)}$ we obtain
$$
M_p(D,G,t) \le t^{k-1 - k\rf{(k-3)/2} }
p^{(k-1)\rf{(k-3)/2}+o(1)}.
$$
Since, as we have mentioned,  there are only finitely many possibilities for the graphs
$G_\vec{t}(D)$, recalling~\eqref{eq:Dt} and the bound on
the divisor function (see~\cite[Theorem~320]{HardyWright}),
 we obtain the desired result.

\section{Comments}

A slight modification of our
approach can easily produce a nontrivial bound for $1\le k \le 3$
as well, however we do not know how to relax the condition~\eqref{eq:large t}.

It is certainly an interesting question to show that almost all
$k$-lacunary polynomials of a large degree are irreducible over
$\F_p$. In fact, as a first step one can try to get a lower bound
on the degree over $\F_p$ of the splitting field of a ``random''
$k$-lacunary polynomial.

\section*{Acknowledgements}

The authors would like to thank the referee for the careful reading
of the manuscript and helpful suggestions.

During the preparation of this work the second author was supported in part by
the  Australian Research Council  Grant~DP1092835.

\end{document}